\numberwithin{equation}{section}
\theoremstyle{definition}
\newtheorem{thm}{Theorem}[section]
\newtheorem{prop}[thm]{Proposition}
\newtheorem{defi}[thm]{Definition}
\newtheorem{rem}[thm]{Remark}
\newtheorem{note}[thm]{Notation}
\DeclareMathOperator{\Ima}{\mathrm{Im}}
\DeclareMathOperator{\p3}{\mathbb{P}^3}
\DeclareMathOperator{\pr}{\mathrm{pr}}
\DeclareMathOperator{\mo}{\mathcal{O}}
\newcommand{\mr}[1]{\mathrm{#1}}
\newcommand{\mb}[1]{\mathbb{#1}}
\newcommand{\mc}[1]{\mathcal{#1}}
\newcommand{\ov}[1]{\overline{#1}}
\newcommand{\mf}[1]{\mathfrak{#1}}
\begin{document}

\title[Generators of the cohomology ring]
{Generators of the cohomology ring, after Newstead}

\author[S. Basu]{Suratno Basu}

\address{SRM University, Amaravathy, Mangalagiri Mandal
Guntur District,  Andhra Pradesh 522240, India}

\email{suratno.b@srmap.edu.in}

\author[A. Dan]{Ananyo Dan}

\address{School of Mathematics and Statistics, University of Sheffield, Hicks building, Hounsfield Road, S3 7RH, UK}

\email{a.dan@sheffield.ac.uk}

\author[I. Kaur]{Inder Kaur}

\address{Institut für Mathematik,
Goethe-Universität Frankfurt,
Robert-Mayer-Str. 6-8,
60325 Frankfurt am Main, Germany}

\email{kaur@math.uni-frankfurt.de}

\subjclass[2010]{$32$G$20$,  $32$S$35$, $14$D$07$,  $14$D$22$, $14$D$20$,  $14$H$60$, $55$R$40$}

\keywords{Cohomology ring, Moduli spaces of semi-stable sheaves, nodal curves, limit mixed Hodge structures}

\date{\today}

\begin{abstract}
The moduli space of rank $2$ stable, locally-free sheaves 
 with fixed odd degree determinant over a smooth, projective curve
 is a classical object. 
In the early 1970s, 
Newstead gave the generators of the cohomology ring of this moduli space.
There are generalizations of this result to higher rank, but 
nothing is known for the case when the underlying curve is singular.
In this article, we generalize Newstead's result to the case when the 
 underlying curve is irreducible, nodal. We show that the generators of the cohomology ring in the nodal curve case 
 arise naturally as degeneration of Newstead's generators in the smooth curve case.
\end{abstract}

\maketitle

\section{Introduction}
Throughout the article, the underlying field will be $\mb{C}$.
Let $C$ be a smooth, projective curve of genus $g \geq 2$, $d$ an odd integer 
and $\mc{L}$ an invertible sheaf of degree $d$ over $C$.
Denote by $M_C(2,d)$ the moduli space of stable locally-free sheaves of 
rank $2$ and degree $d$ over $C$, by 
$M_C(2,\mc{L})$ the  sub-moduli space of $M_C(2,d)$ parameterizing locally-free sheaves with determinant $\mc{L}$. We know that the rational cohomology ring 
of the moduli space $M_C(2,\mc{L})$ is finitely generated. So, it is 
natural to ask if the generators can be described explicitly.
This was answered by Newstead \cite{new1} in $1972$
and led to several subsequent results (see Atiyah-Bott \cite{atyang}, 
Biswas-Raghavendra  \cite{BR}, Kirwan \cite{kirw}).
 Knowing the generators of the cohomology ring
of the moduli space $M_C(2,\mc{L})$ has seen applications 
ranging from the computation of the Hodge-Poincar\'{e} polynomial to mirror symmetry
(see \cite{HT, HT2, HRV}).
Since moduli spaces of semi-stable sheaves with fixed determinant
over irreducible nodal curves exist, it is natural to ask 
if we can describe the generators of the cohomology ring of the moduli space
in this case.

Let $X_0$ be an irreducible, nodal curve with exactly one node, of genus $g \ge 2$, $\Delta$ the open disc
and $\pi:\mc{X} \to \Delta$ be a flat family of smooth, projective curves of genus $g$
degenerating to $X_0$ i.e., $\pi$ is smooth over $\Delta^*$ and $\pi^{-1}(0)=X_0$.
Let $\mc{L}$ be a relative odd degree invertible sheaf on $\mc{X}$. 
It is well-known that there are two different relative moduli spaces of semi-stable sheaves of rank $2$ and 
determinant $\mc{L}$. Denote by $U(2,\mc{L})$ the relative moduli space of stable, 
rank $2$ torsion-free
sheaves with determinant $\mc{L}$ defined over $\mc{X}$ 
(see \cite{bht3, sun1} for precise definition and construction) and 
 by $\mc{G}(2,\mc{L})$ the relative Gieseker moduli space of rank $2$, semi-stable sheaves defined over families of curves semi-stably equivalent to $\mc{X}$
(see \S \ref{sec:gies}). 
Both the moduli spaces $U(2,\mc{L})$ and $\mc{G}(2,\mc{L})$ agree over the punctured 
disc and have been studied in 
various contexts (see \cite{sun1, DK2, indpre, mumf}).
Denote by $\mc{G}_{X_0}(2,\mc{L}_0)$ (resp. $U_{X_0}(2,\mc{L}_0)$)
the central fiber of the relative moduli space $\mc{G}(2,\mc{L})$ (resp. 
$U(2,\mc{L})$).
 It is well-known that there is a proper morphism 
from $\mc{G}_{X_0}(2,\mc{L}_0)$ to $U_{X_0}(2,\mc{L}_0)$, which is not an isomorphism.
In \cite{K4},  Kaur showed that both the moduli spaces
$\mc{G}_{X_0}(2,\mc{L}_0)$ and $U_{X_0}(2,\mc{L}_0)$ are non-empty for any $\mc{L}_0$.
In \cite{mumf} the last two 
authors gave the generators for the rational cohomology ring of $U_{X_0}(2, \mc{L}_0)$,
in the  context of studying a generalized version of a conjecture of Mumford, which states that 
a certain set of relations between the generators of the cohomology ring is complete.
 The goal of this article is to describe the 
   generators of the rational cohomology ring of $\mc{G}_{X_0}(2,\mc{L}_0)$.
   We show that in fact, the rational cohomology ring 
   of $\mc{G}_{X_0}(2,\mc{L}_0)$ has twice as many 
generators as the rational cohomology ring of $U_{X_0}(2, \mc{L}_0)$
(compare with \cite[Theorem $1.1$]{mumf}). 
   Moreover, the proof given here is simpler and does not need much of the 
   technicalities used in \cite{mumf}. Finally, we show that these
   generators arise naturally as degeneration of Newstead's generators in the smooth curve case.

   The main result of this article is: 
   \begin{thm}\label{th:gen05}
  There exist distinct $\xi_j^{(i)} \in H^i(\mc{G}_{X_0}(2,\mc{L}_0),\mb{Q})$ for the following values of $(i,j)$ forming a minimal set of generators of 
    the cohomology ring $H^*(\mc{G}_{X_0}(2,\mc{L}_0),\mb{Q})$:
    
\begin{center}
    \begin{tabular}{| l | l | }
    \hline
    i & j \\ \hline
    2 & \{1,2\} \\ \hline
    3 & \{1, 2, ..., 2g-1\} \\ \hline
    4 & \{1, 2, 3\} \\    \hline
    5 & \{1, 2, ..., 2g-2\} \\ \hline
    6 & \{1, 2\}\\ \hline
    \end{tabular}
\end{center}
        \end{thm}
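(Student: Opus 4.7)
The plan is to analyze the exact sequence \eqref{ntor02} degree by degree for $i = 2, \dots, 6$, combining it with Newstead's theorem applied to two moduli spaces (the generic fiber $\mc{G}(2,\mc{L})_\infty$, which has the cohomology of $M_C(2,\mc{L})$ for a smooth curve of genus $g$, and the moduli space $M_{\widetilde{X}_0}(2,\widetilde{\mc{L}}_0)$ of Notation \ref{note:ntor11}) together with the Leray--Hirsch theorem for the projective bundles $\rho_1$ and $\rho_2$.

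First I would use Leray--Hirsch to describe $H^*(\mc{G}_0 \cap \mc{G}_1, \mb{Q})$ and $H^*(\mc{G}_1, \mb{Q})$ as free modules over $H^*(M_{\widetilde{X}_0}(2,\widetilde{\mc{L}}_0), \mb{Q})$ with bases $\{1, \ell_1, \ell_2, \ell_1\ell_2\}$ and $\{1, h, h^2, h^3\}$ respectively, where $\ell_1, \ell_2 \in H^2$ are the relative hyperplane classes of the two factors of the $\mb{P}^1 \times \mb{P}^1$-bundle and $h \in H^2$ is the relative hyperplane of the $\mb{P}^3$-bundle. Combined with Notation \ref{note:ntor11}, this gives explicit additive bases in terms of Newstead's generators $\alpha_0, \beta_0, \psi'_1, \dots, \psi'_{2g-2}$ and the fibre classes.

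Next I would work out \eqref{ntor02} in each degree. By Newstead's theorem, $H^*(\mc{G}(2,\mc{L})_\infty, \mb{Q})$ is generated by classes $\alpha \in H^2$, $\beta \in H^4$ and $\psi_1, \dots, \psi_{2g} \in H^3$; the Picard--Lefschetz formula shows that the monodromy of the one-parameter degeneration fixes $\alpha$ and $\beta$ and leaves a $(2g-1)$-dimensional invariant subspace among the $\psi_i$'s, corresponding to the single vanishing cycle in $H^1$ of the generic curve. Thus the image of $\mr{sp}_i$ contributes a lift of $\alpha$ in $H^2$, $(2g-1)$ lifts of invariant $\psi_i$'s in $H^3$, and a lift of $\beta$ in $H^4$. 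The kernel of $\mr{sp}_i$ equals the image of $f_i$; reducing the Leray--Hirsch basis above modulo decomposables in $H^*(\mc{G}_{X_0}(2,\mc{L}_0), \mb{Q})$ via the projection formula for $i_{1,*}$ and $i_{2,*}$ shows that $f_i$ contributes exactly: $f_2(1)$ in degree $2$; no new indecomposable in degree $3$; the two classes $f_4(\ell_1), f_4(\ell_2)$ in degree $4$; the $(2g-2)$ classes $f_5(\psi'_j)$ in degree $5$; and $f_6(\beta_0)$ together with $f_6(\ell_1\ell_2)$ in degree $6$. Summing the two contributions in each row reproduces the table.

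The main obstacle is establishing \emph{minimality}. This amounts to computing the indecomposable quotient $Q^i := H^i(\mc{G}_{X_0}(2,\mc{L}_0), \mb{Q}) / \sum_{0<j<i} H^j \cdot H^{i-j}$ and showing that each candidate generator projects to a nonzero class of $Q^i$. The delicate point is to show that the Gysin classes $f_5(\psi'_j)$ for $1\le j \le 2g-2$ and the classes $f_6(\beta_0), f_6(\ell_1\ell_2)$ are genuinely indecomposable. The strategy is to use the projection formula for $i_{1,*}$ and $i_{2,*}$ to rewrite any hypothetical decomposition of $f_i(z)$ in terms of classes pulled back to $\mc{G}_0 \cap \mc{G}_1$, thereby reducing indecomposability to a linear-independence statement inside the explicit Leray--Hirsch basis of $H^*(\mc{G}_0 \cap \mc{G}_1, \mb{Q})$, which can be verified directly using the relations between $\alpha_0, \beta_0, \psi'_j$ coming from Newstead's theorem on $M_{\widetilde{X}_0}(2, \widetilde{\mc{L}}_0)$.
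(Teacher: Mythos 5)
Your overall framework (the exact sequence \eqref{ntor02}, Leray--Hirsch for the two bundles over $M_{\widetilde{X}_0}(2,\widetilde{\mc{L}}_0)$, and Newstead's generators on both the generic fibre and $M_{\widetilde{X}_0}(2,\widetilde{\mc{L}}_0)$) is the same as the paper's, but your explicit lists in degrees $4$ and $6$ are wrong, and the error traces back to one conceptual gap. In degree $4$: by the exactness of \eqref{ntor08}, the kernel of $f_4$ is spanned by $\xi_1-\xi_2$, so your two classes $f_4(\ell_1)$ and $f_4(\ell_2)$ are \emph{equal}; you have therefore produced only two independent new classes in degree $4$. The third generator the paper takes is $\xi_3^{(4)}=f_4(\alpha_0)$, the Gysin image of Newstead's class $\alpha_0$ pulled back from the base of the $\mb{P}^1\times\mb{P}^1$-bundle; this is not accounted for by your set, and it is not decomposable because $\alpha_0$ need not be the restriction to $\mc{G}_0\cap\mc{G}_1$ of a class on $\mc{G}_{X_0}(2,\mc{L}_0)$, so the projection formula cannot be used to absorb it.

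In degree $6$ the gap is more serious. Both of your candidates $f_6(\beta_0)$ and $f_6(\ell_1\ell_2)$ lie in $\Ima(f_6)=\ker(\mr{sp}_6)$, while all your lower-degree generators specialize into the subring of $H^*(\mc{G}(2,\mc{L})_\infty,\mb{Q})$ generated by $\alpha_\infty,\beta_\infty,\psi_1^\infty,\dots,\psi_{2g-1}^\infty$. But $\Ima(\mr{sp}_6)=W_6H^6(\mc{G}(2,\mc{L})_\infty,\mb{Q})$ is strictly larger than the degree-$6$ part of that subring: it contains $\psi_g^\infty\psi_{2g}^\infty$, which is monodromy invariant (since $N\psi_{2g}^\infty$ is proportional to $\psi_g^\infty$ and odd classes square to zero) yet is not a polynomial in the invariant generators; this is exactly why the paper invokes the monomial description of \cite[Remark 5.2]{kingn} to check $\mr{sp}_k(V_k)=\Ima(\mr{sp}_k)$ in every degree, and why its second degree-$6$ generator is a class $\xi_2^{(6)}$ with $\mr{sp}_6(\xi_2^{(6)})=\psi_g^\infty\psi_{2g}^\infty$ rather than a Gysin class. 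The root of both errors is your implicit identification of $\Ima(\mr{sp}_\bullet)$ with ``lifts of $\alpha$, $\beta$ and the $2g-1$ invariant $\psi_i$'': the image is the weight piece $W_kH^k$ (equivalently the monodromy invariants here), and products of non-invariant classes can be invariant, so surjectivity of your candidate subring onto $\Ima(\mr{sp}_k)$ must be verified degree by degree, which your set fails to do in degree $6$ (and, after the coincidence $f_4(\ell_1)=f_4(\ell_2)$, your set is also too small in degree $4$).
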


The idea is to use the theory of variation of mixed Hodge structures by Steenbrink \cite{ste1} and Schmid \cite{schvar}
to relate the mixed Hodge structure on the central fiber of the 
relative moduli space to the limit mixed Hodge structure on the generic fiber.
Denote by $\mc{G}(2,\mc{L})_\infty$ the generic fiber of the 
relative moduli space. By Steenbrink \cite{ste1}, $H^i(\mc{G}(2,\mc{L})_\infty, \mb{Q})$
is equipped with a (limit) mixed Hodge structure such that
the specialization morphism \[\mr{sp}_i: H^i(\mc{G}_{X_0}(2,\mc{L}_0), \mb{Q}) \to H^i(\mc{G}(2,\mc{L})_\infty, \mb{Q})\]
is a morphism of mixed Hodge structures for all $i \ge 0$.
The generators of the cohomology ring $H^*(M_C(2,\mc{L}),\mb{Q})$
as obtained by Newstead in \cite{new1} immediately give us the generators of the cohomology ring $H^*(\mc{G}(2,\mc{L})_\infty, \mb{Q})$.
However, not all elements in $H^i(\mc{G}(2, \mc{L})_\infty, \mb{Q})$
are monodromy invariant. As a result the specialization morphism, 
$\mr{sp}_i$ is neither injective nor surjective (see \eqref{ntor02}).
To compute explicitly the kernel and cokernel of $\mr{sp}_i$ we 
study the Gysin morphism from the intersection of the two 
components of $\mc{G}_{X_0}(2,\mc{L}_0)$ to the irreducible
components.

{\bf{Notation:}} Given any morphism $f:\mc{Y} \to S$ and a point $s \in S$, we denote by $\mc{Y}_s:=f^{-1}(s)$.
 The open unit disc is denoted by $\Delta$ and $\Delta^*:=\Delta\backslash \{0\}$ denotes the punctured disc.

 \emph{Acknowledgements} 
 We thank Prof. Nagaraj for helpful discussions.
 We are grateful to Prof. Newstead  for his interest in our work and feedback which 
 improved the exposition and made the article more succinct.
 The first named author wish to thank IMSc Chennai and 
 SRM University, AP for funding his research.
While writing this article, the second author was funded by CNPq Brazil and the third author was funded by a PNPD fellowship from CAPES Brazil. 
The second author is currently funded by  EPSRC grant number R/$162871-11-1$ and the third author is currently funded by SFB $326$-GAUS.

\section{Preliminaries}\label{seclim}
 In this section we recall the basic definitions related to limit mixed Hodge structures and the Gieseker moduli space of vector bundles with fixed determinant over an irreducible nodal curve.  
 For a comprehensive treatment of limit mixed Hodge structures we refer the reader to \cite{pet} and for the Gieseker moduli space \cite{tha}.
 
 \subsection{Preliminaries on limit mixed Hodge structures}
 We begin by recalling the definition of a mixed Hodge structure:
 \begin{defi}
  A \textit{mixed Hodge structure} is a triple $(H_{\mathbb{Z}},W_{\bullet},F^{\bullet})$ consisting of:
  \begin{enumerate}
   \item a lattice $H_{\mathbb{Z}}$,
   \item an increasing filtration $W_{\bullet}$ of $H_{\mathbb{Q}} = H_{\mathbb{Z}}\otimes_{\mathbb{Z}} \mathbb{Q}$,
   \item a decreasing filtration $F^{\bullet}$ of $H_{\mathbb{C}} = H_{\mathbb{Z}}\otimes_{\mathbb{Z}} \mathbb{C}$.   
  \end{enumerate}
such that the weight and Hodge filtrations are compatible in the sense that $F^{\bullet}$ induces a Hodge structure of weight $i$ on each of the \textit{graded pieces} 
$\mr{Gr}^W_{i} H_{\mb{Q}} := W_{i} H_{\mb{Q}} / W_{i-1} H_{\mb{Q}}$. 
 \end{defi}

 \begin{note}
 Let $\rho:\mc{Y} \to \Delta$ be a flat family of projective varieties, smooth over $\Delta^*$ and $\rho^{-1}(0)=\mc{Y}_{0}$ a simple normal crossings divisor in $\mc{Y}$. Denote by $\rho':\mc{Y}_{\Delta^*} \to \Delta^*$ the restriction of $\rho$ to $\Delta^*$.
 \end{note}

 By Ehresmann's theorem (see \cite[Theorem $9.3$]{v4}), for all $i \ge 0$, $\mb{H}_{\mc{Y}_{\Delta^*}}^i:=R^i{\rho}'_{*}\mb{Z}$
 is a local system over $\Delta^*$ with fiber $H^i(\mc{Y}_t,\mb{Z})$, for $t \in \Delta^*$.
 To these local systems, we associate the holomorphic vector bundles 
 $\mc{H}_{\mc{Y}_{\Delta^*}}^i:=\mb{H}_{\mc{Y}_{\Delta^*}}^i \otimes_{\mb{Z}} \mo_{\Delta^*}$
 called the \emph{Hodge bundle}.
 There exist holomorphic sub-bundles $F^p\mc{H}_{\mc{Y}_{\Delta^*}}^i \subset \mc{H}_{\mc{Y}_{\Delta^*}}^i$
such that for any $t \in \Delta^*$, the fibers $\left(F^p\mc{H}_{\mc{Y}_{\Delta^*}}^i\right)_t \subset \left(\mc{H}_{\mc{Y}_{\Delta^*}}^i\right)_t$
can be identified respectively with 
$F^pH^i(\mc{Y}_t,\mb{C}) \subset H^i(\mc{Y}_t,\mb{C})$ where $F^p$ denotes the Hodge filtration (see \cite[\S $10.2.1$]{v4}). By \cite[Definition $11.4$]{pet} there exists a canonical extension $\ov{\mc{H}}_{\mc{Y}}^i$ of ${\mc{H}}_{\mc{Y}_{\Delta^*}}^i$ to $\Delta$ such that $\ov{\mc{H}}_{\mc{Y}}^i$ is locally-free over $\Delta$.
 
\begin{defi}\label{defilmhs} 
 Denote by $\mf{h}$ the upper half plane $\{ z \in \mathbb{C} \ | \ \mr{Im}(z) > 0 \}$ and consider the universal covering of the punctured unit disc $\mf{h} \to \Delta^*$, given by $z \mapsto e^{2 \pi i z}$. 
 Let $e:\mf{h} \to \Delta^* \xrightarrow{j} \Delta$ denote the composed morphism where $j:\Delta^* \to \Delta$ is the inclusion morphism. Denote by $\mc{Y}_\infty:=\mc{Y} \times_{\Delta} \mf{h}$ the base change of the family $\mc{Y}$ over $\Delta$ to $\mf{h}$, by the morphism $e$. We now define a mixed Hodge structure on $H^i(\mc{Y}_{\infty},\mb{Z})$:

 The Hodge sub-bundle $F^p\mc{H}_{\mc{Y}_{\Delta^*}}$ on $\Delta^*$ extends to $\Delta$ by setting 
 $F^p\ov{\mc{H}}_{\mc{Y}}^i:= j_*\left(F^p\mc{H}_{\mc{Y}_{\Delta^*}}^i\right) \cap  \ov{\mc{H}}_{\mc{Y}}^i$.
  
   
   By \cite[XI-$8$]{pet}, for a choice of the parameter $t$ on $\Delta$, the central fiber of the canonical extension $\ov{\mc{H}}_{\mc{Y}}^i$ can be identified with the cohomology group $H^i(\mc{Y}_{\infty},\mb{C})$:  
 \begin{equation}\label{tor23}
  g^i_{_t}:H^i(\mc{Y}_{\infty},\mb{C}) \xrightarrow{\sim} \left(\ov{\mc{H}}_{\mc{Y}}^i\right)_0.
 \end{equation}
  As a consequence, there exist Hodge filtrations on $H^i(\mc{Y}_{\infty},\mb{C})$ defined by
\[F^pH^i(\mc{Y}_{\infty},\mb{C}):=(g_{_t}^i)^{-1}\left(F^p\ov{\mc{H}}_{\mc{Y}}^i\right)_0.\]

To define a weight filtration on $H^i(\mc{Y}_{\infty},\mb{Z})$, we use the local monodromy transformations. For any $s \in \Delta^*$ and $i \ge 0$, denote by 
\[T_{s,i}: H^i(\mc{Y}_s,\mb{Z}) \to H^i(\mc{Y}_s,\mb{Z}) \, \mbox{ and }\, T_{s,i}^{\mb{Q}}: H^i(\mc{Y}_s,\mb{Q}) \to H^i(\mc{Y}_s,\mb{Q})\]
the \emph{local monodromy transformations} associated to the local system $\mb{H}_{\mc{Y}_{\Delta^*}}^i$
   defined by parallel transport along a counterclockwise loop about $0 \in \Delta$ (see \cite[\S $11.1.1$]{pet}).
 By \cite[Theorem II.$1.17$]{deli2}  (see also \cite[Proposition I.$7.8.1$]{kuli}) the automorphism extends to a $\mb{Q}$-automorphism 
\begin{equation}\label{int01}
 T_i: H^i(\mc{Y}_{\infty},\mb{Q}) \to H^i(\mc{Y}_{\infty},\mb{Q}).
\end{equation}

  Let $N_i$ be the logarithm of the monodromy operator $T_i$.
  By \cite[Lemma-Definition $11.9$]{pet}, there exists an unique increasing \emph{monodromy weight filtration} $W_\bullet$ on $H^i(\mc{Y}_\infty,\mb{Q})$ such that:
 \begin{enumerate}
  \item  for $j \ge 2$, $N_i(W_jH^i(\mc{Y}_\infty,\mb{Q})) \subset W_{j-2}H^i(\mc{Y}_\infty,\mb{Q})$ and
  \item the map $N_i^l: \mr{Gr}^W_{i+l} H^i(\mc{Y}_\infty,\mb{Q}) \to \mr{Gr}^W_{i-l} H^i(\mc{Y}_\infty,\mb{Q})(-l)$ 
  is an isomorphism for all $l \ge 0$.
   \end{enumerate}
  By \cite[Theorem $6.16$]{schvar} the induced filtrations on  $H^i(\mc{Y}_{\infty},\mb{C})$ define a mixed Hodge structure $(H^i(\mc{Y}_{\infty},\mb{Z}),W_\bullet,F^\bullet)$, called the \emph{limit mixed Hodge structure} on $H^i(\mc{Y}_{\infty},\mb{Z})$.
  \end{defi}
  
   The main purpose for using the limit mixed Hodge structure on $H^i(\mc{Y}_{\infty},\mb{Z})$ is that when equipped with this Hodge structure, the \emph{specialization morphism}
  \[\mr{sp}_i: H^i(\mc{Y}_0, \mb{Q}) \to H^i(\mc{Y}_\infty, \mb{Q})\]
  is indeed a morphism of mixed Hodge structures.

In the case when the central fiber $\mc{Y}_0$ is a reduced simple normal crossings divisor (as will be the case for us), the computation for the weight filtration is vastly simplified by using the Steenbrink spectral sequence. We now recall the limit weight spectral sequence in the case when the special fiber consists of exactly \emph{two} irreducible components. We will apply this result to our situation in \S \ref{sec:gieslmhs}.

  \begin{prop}\label{tor26}
  Suppose the special fiber $\mc{Y}_0$ consists of exactly \emph{two} irreducible components, say $Y_1$ and $Y_2$.  
 The \emph{limit weight spectral sequence} $^{^{\infty}}_{_W}E_1^{p,q} \Rightarrow H^{p+q}(\mc{Y}_\infty , \mb{Q})$ consists of the following terms:
 \begin{enumerate}
  \item if $|p| \ge 2$, then $^{^{\infty}}_{_W}E_1^{p,q}=0$,
  \item $^{^{\infty}}_{_W}E_1^{1,q}=H^q(Y_1 \cap Y_2,\mb{Q})(0)$, $^{^{\infty}}_{_W}E_1^{0,q}=H^q(Y_1,\mb{Q})(0) \oplus H^q(Y_2,\mb{Q})(0)$  and $^{^{\infty}}_{_W}E_1^{-1,q}=H^{q-2}(Y_1 \cap Y_2,\mb{Q})(-1)$,
  \item the differential map $d_1:\, ^{^{\infty}}_{_W}E_1^{0,q} \to \, ^{^{\infty}}_{_W}E_1^{1,q}$ is the restriction morphism and \[d_1:\, ^{^{\infty}}_{_W}E_1^{-1,q} \to \, ^{^{\infty}}_{_W}E_1^{0,q}\] is the Gysin morphism.
 \end{enumerate}
The limit weight spectral sequence $^{^{\infty}}_{_W}E_1^{p,q}$ degenerates at $E_2$.
Similarly, the \emph{weight spectral sequence } $_{_W}E_1^{p,q} \Rightarrow H^{p+q}(\mc{Y}_0 , \mb{Q})$  on $\mc{Y}_0$ consists of the following terms:
 \begin{enumerate}
  \item for $p \ge 2$ or $p<0$, we have $_{_W}E_1^{p,q}=0$,
  \item $_{_W}E_1^{1,q}=H^q(Y_1 \cap Y_2,\mb{Q})(0)$ and $_{_W}E_1^{0,q}=H^q(Y_1,\mb{Q})(0) \oplus H^q(Y_2,\mb{Q})(0)$,
  \item the differential map $d_1:\, _{_W}E_1^{0,q} \to \, _{_W}E_1^{1,q}$ is the restriction morphism.
 \end{enumerate}
 The spectral sequence $_{_W}E_1^{p,q}$ degenerates at $E_2$. 
 \end{prop}
 
 \begin{proof}
  See \cite[Corollary $11.23$]{pet} and \cite[Example $3.5$]{ste1}.
 \end{proof}

 We now recall without proof some results that we use repeatedly in \S \ref{sec:proofofthm}.
 
 \begin{rem}\label{ntor03}
 By \cite[Corollary $2.4$]{mumf} we have the following exact sequence of mixed Hodge structures:
  \begin{equation}\label{ntor02}
 H^{i-2}(Y_1 \cap Y_2, \mb{Q})(-1) \xrightarrow{f_i} H^i(\mc{Y}_0,\mb{Q}) \xrightarrow{\mr{sp}_i} H^i(\mc{Y}_\infty, \mb{Q}) \xrightarrow{g_i} \mr{Gr}^W_{i+1} H^i(\mc{Y}_\infty, \mb{Q}) \to 0,
  \end{equation}
where $f_i$ is the natural morphism induced by the Gysin morphism from $H^{i-2}(Y_1 \cap Y_2, \mb{Q})(-1)$ to $H^i(Y_1,\mb{Q}) \oplus H^i(Y_2, \mb{Q})$
(use the Mayer-Vietoris sequence associated to $Y_1 \cup Y_2$)
and $g_i$ is the natural projection.
\end{rem}

 \subsection{Relative Gieseker moduli space}\label{sec:gies}
 
 \begin{note}\label{tor33}
 Let $X_0$ be an irreducible nodal curve of genus $g \ge 2$, with exactly
 one node, say $x_0$. Denote by $\pi_0:\widetilde{X}_0 \to X_0$ the normalization map.
 Since the moduli space of stable curves 
 is complete, there exists a regular, flat family of projective curves
  \[\pi_1:\mc{X} \to \Delta\] smooth over 
   $\Delta^*$ and central fiber isomorphic to $X_0$ (see \cite[Theorem B$.2$]{bake}).
   Fix an invertible sheaf $\mc{L}$ on $\mc{X}$ of relative odd degree, say $d$. 
  Set $\mc{L}_0:=\mc{L}|_{X_0}$, the restriction of $\mc{L}$ to the central fiber.
  Denote by $\widetilde{\mc{L}}_0:=\pi_0^*(\mc{L}_0)$.
 \end{note}
 
  By \cite[\S $6$]{tha} there exists a regular, flat, projective family 
    \[\pi_2:\mc{G}(2,\mc{L}) \to \Delta\]
    called the \emph{relative Gieseker moduli spaces of rank $2$ semi-stable sheaves on $\mc{X}$ with determinant 
  $\mc{L}$} such that for all $s \in \Delta^*$, $\mc{G}(2,\mc{L})_s:=\pi_2^{-1}(s)=M_{\mc{X}_s}(2,\mc{L}_s)$ is the moduli space of rank $2$
  stable vector bundles on $\mc{X}_s$ with determinant $\mc{L}_s$ and
  the central fiber $\pi_2^{-1}(0)$, denoted 
  $\mc{G}_{X_0}(2,\mc{L}_0)$, is a reduced simple normal crossings divisor of $\mc{G}(2,\mc{L})$. In particular, $\mc{G}(2,\mc{L})$ is smooth over $\Delta^*$.
  
   Denote by $M_{\widetilde{X}_0}(2,\widetilde{\mc{L}}_0)$ the fine moduli space of semi-stable sheaves of rank $2$ and  
  determinant $\widetilde{\mc{L}}_0$ over $\widetilde{X}_0$.
  By \cite[\S $6$]{tha}, $\mc{G}_{X_0}(2,\mc{L}_0)$ can be written as the union of two irreducible components, 
  say $\mc{G}_0$ and $\mc{G}_1$, where 
  $\mc{G}_1$ (resp. $\mc{G}_0 \cap \mc{G}_1$) is isomorphic to a $\p3$ (resp. $\mb{P}^1 \times \mb{P}^1$)-bundle 
  over $M_{\widetilde{X}_0}(2,\widetilde{\mc{L}}_0)$.  
  Denote by \[\rho_1: \mc{G}_0 \cap \mc{G}_1 \longrightarrow M_{\widetilde{X}_0}(2,\widetilde{\mc{L}}_0)\, 
\mbox{ and } \rho_2: \mc{G}_1 \longrightarrow M_{\widetilde{X}_0}(2,\widetilde{\mc{L}}_0)\]
the natural projections.

  \subsection{Limit mixed Hodge structure for the relative Gieseker moduli space}\label{sec:gieslmhs}
  Let $\mc{G}(2,\mc{L})_\infty$ denote the base change of the family $\mc{G}(2,\mc{L})$ over $\Delta$ to the universal cover $\mf{h}$ by the morphism  $e:\mf{h} \to \Delta^* \xrightarrow{j} \Delta$. Using Definition \ref{defilmhs}, we can 
 equip $H^{i}(\mc{G}(2,\mc{L})_\infty, \mb{Q})$ with a limit mixed Hodge structure, for any $i$ and a specialization morphism as:
  \[\mr{sp}_i: H^i(\mc{G}_{X_0}(2,\mc{L}_0), \mb{Q}) \to H^i(\mc{G}(2,\mc{L})_\infty, \mb{Q}).\]

 Denote by $\iota_1:\mc{G}_0 \cap \mc{G}_1 \hookrightarrow \mc{G}_0$ and $\iota_2:\mc{G}_0 \cap \mc{G}_1 \hookrightarrow \mc{G}_1$
  the natural inclusions. Using the Mayer-Vietoris sequence, we have the exact sequence:
  \begin{equation}\label{mv01}
 H^{i-1}(\mc{G}_0 \cap \mc{G}_1, \mb{Q}) \to  H^i(\mc{G}_{X_0}(2,\mc{L}_0), \mb{Q}) \to H^i(\mc{G}_0,\mb{Q}) \oplus H^i(\mc{G}_1, \mb{Q}) \xrightarrow{\iota_1^*-\iota_2^*} H^i(\mc{G}_0 \cap \mc{G}_1, \mb{Q}).
  \end{equation}
  By \cite[Proposition B.$30$]{pet}, one can check that the composed morphism 
  \begin{equation}\label{mv02}
   H^{i-2}(\mc{G}_0 \cap \mc{G}_1,\mb{Q})(-1) \xrightarrow{(\iota_{1,_*},\iota_{2,_*})} H^i(\mc{G}_0,\mb{Q}) \oplus H^i(\mc{G}_1, \mb{Q}) \xrightarrow{(\iota_1^*-\iota_2^*)} H^i(\mc{G}_0 \cap \mc{G}_1, \mb{Q})
  \end{equation}
  is a zero map. As $\mc{G}_0 \cap \mc{G}_1, \mc{G}_0$ and $\mc{G}_1$ are smooth, $H^{i-2}(\mc{G}_0 \cap \mc{G}_1, \mb{Q})(-1), 
  H^i(\mc{G}_0, \mb{Q})$ and $H^i(\mc{G}_1, \mb{Q})$ are pure Hodge structures of weight $i$.
  Therefore, the second morphism in \eqref{mv01} induces an injective morphism 
  \[\mr{Gr}^W_i H^i(\mc{G}_{X_0}(2,\mc{L}_0), \mb{Q}) \hookrightarrow H^i(\mc{G}_0, \mb{Q}) \oplus H^i(\mc{G}_1, \mb{Q}).\]
  As a result, the Gysin morphism \[(\iota_{1,_*}, \iota_{2,_*}): H^{i-2}(\mc{G}_0 \cap \mc{G}_1, \mb{Q})(-1) \to H^i(\mc{G}_0, \mb{Q}) \oplus H^i(\mc{G}_1, \mb{Q})\]
  factors through $\mr{Gr}^W_i H^i(\mc{G}_{X_0}(2,\mc{L}_0), \mb{Q})$. Denote by 
  \[f_i: H^{i-2}(\mc{G}_0 \cap \mc{G}_1, \mb{Q})(-1) \to \mr{Gr}^W_i H^i(\mc{G}_{X_0}(2,\mc{L}_0), \mb{Q}) \hookrightarrow H^i(\mc{G}_{X_0}(2,\mc{L}_0), \mb{Q}),\]
  the composed morphism. Using \cite[Theorem $4.2$]{mumf}, we observe that the kernel of the morphism $f_i$ is isomorphic
  to $H^{i-4}(M_{\widetilde{X}_0}(2,\widetilde{\mc{L}}_0), \mb{Q})(-2)$.  
  Applying Remark \ref{ntor03}, we then get the following exact sequence of mixed Hodge structures:
    \begin{eqnarray}\label{ntor09}
 &0 \to H^{i-4}(M_{\widetilde{X}_0}(2,\widetilde{\mc{L}}_0), \mb{Q})(-2) \to H^{i-2}(\mc{G}_0 \cap \mc{G}_1, \mb{Q})(-1) \xrightarrow{f_i} H^i(\mc{G}_{X_0}(2,\mc{L}_0),\mb{Q}) \xrightarrow{\mr{sp}_i} \nonumber\\
 &\xrightarrow{\mr{sp}_i} H^i(\mc{G}(2,\mc{L})_\infty, \mb{Q}) \xrightarrow{g_i} \mr{Gr}^W_{i+1} H^i(\mc{G}(2,\mc{L})_\infty, \mb{Q}) \to 0,
  \end{eqnarray}
  where $g_i$ is the natural projection.

 \begin{rem}\label{isom2} 
   Applying Proposition \ref{tor26} to the family $\pi_2$, we have
 \[\mr{Gr}^W_4H^3(\mc{G}(2,\mc{L})_\infty, \mb{Q})=\, ^{^{\infty}}_{_W}E_2^{-1,4}=\ker(\, ^{^{\infty}}_{_W}E_1^{-1,4} \to \, ^{^{\infty}}_{_W}E_1^{0,4}) =\]
 \[= \ker(H^2(\mc{G}_0 \cap \mc{G}_1,\mb{Q})(-1) \xrightarrow{(\iota_{1_*},\iota_{2_*})} H^4(\mc{G}_0,\mb{Q}) \oplus H^4(\mc{G}_1,\mb{Q})) \cong 
 H^0(M_{\widetilde{X}_0}(2,\widetilde{\mc{L}}_0), \mb{Q})(-2)= \mb{Q},\]
 where the second last isomorphism follows from \cite[Theorem $4.2$]{mumf}.
  Moreover, by the definition of limit mixed Hodge structure as given in Definition \ref{defilmhs}, this implies 
  \[\mr{Gr}_2^WH^3(\mc{G}(2,\mc{L})_{\infty}, \mb{Q}) = \mr{Gr}^W_4H^3(\mc{G}(2,\mc{L})_\infty,\mb{Q}) \cong \mb{Q}.\]
 \end{rem}
  
\section{Generators of the cohomology ring of the Gieseker moduli space}\label{sec:proofofthm}
 
In this section, we give the generators of the Gieseker's moduli space 
of rank $2$, semi-stable sheaves with fixed determinant over an irreducible nodal curve with one node, thereby generalizing the 
classical result of Newstead \cite[Theorem $1$]{new1} to this case. 
 
\begin{note}\label{note:ntor11}
 Recall, \cite[Theorem $1$]{new1} states that there exists 
 \[\alpha_0 \in H^2(M_{\widetilde{X}_0}(2,\widetilde{\mc{L}}_0),\mb{Z}), \beta_0 \in H^4(M_{\widetilde{X}_0}(2,\widetilde{\mc{L}}_0),\mb{Z})\]
 and $\psi'_1, \psi'_2, ...,\psi'_{2g-2} \in H^3(M_{\widetilde{X}_0}(2,\widetilde{\mc{L}}_0),\mb{Z})$, generating 
  $H^*(M_{\widetilde{X}_0}(2,\widetilde{\mc{L}}_0),\mb{Q})$. 
 \end{note}
 
\begin{proof}[Proof of Theorem \ref{th:gen05}]  
For any $s \in \Delta^*$, 
  choose a symplectic basis $e_1,...,e_{2g}$ of $H^1(\mc{X}_s,\mb{Z})$ such that $e_i \cup e_{i+g}=-[\mc{X}_s]^\vee$ and $e_i \cup e_j=0$ for $|j-i| \not= g$,
 where $[\mc{X}_s]^\vee$ denotes the (Poincar\'{e}) dual fundamental class of $\mc{X}_s$.
 By \cite[Proposition $1$]{mumn}, there exists a unimodular isomorphism of pure Hodge structures:
 \[\phi_s:H^1(\mc{X}_s,\mb{Z}) \xrightarrow{\sim} H^3(M_{\mc{X}_s}(2,\mc{L}_s),\mb{Z}).\]
 Denote by $\psi_i:=\phi_s(e_i)$ for $1 \le i \le 2g$. By \cite[Theorem $1$]{new1}, there exists $\alpha_s \in H^2(M_{\mc{X}_s}(2,\mc{L}_s), \mb{Z})$
 and $\beta_s \in H^4(M_{\mc{X}_s}(2,\mc{L}_s), \mb{Z})$ such that the cohomology ring $H^*(M_{\mc{X}_s}(2,\mc{L}_s), \mb{Q})$ is generated by $\alpha_s, \beta_s$
 and $\psi_i$ for $1 \le i \le 2g$. Using Ehresmann's theorem, we have isomorphisms:
 \[\tau_s^i: H^i(M_{\mc{X}_s}(2,\mc{L}_s),\mb{Z}) \to H^i(\mc{G}(2,\mc{L})_\infty,\mb{Z})\]
 commuting with the cup-product (cup-product commutes with pull-back by continuous maps)
\[ \mbox{i.e., }\, \, \tau_s^i(\sigma_1) \cup \tau_s^j(\sigma_2)=\tau_s^{i+j}(\sigma_1 \cup \sigma_2)
 \mbox{ for } \sigma_1 \in H^i(M_{\mc{X}_s}(2,\mc{L}_s),\mb{Z}) \mbox{ and }
 \sigma_2 \in H^j(M_{\mc{X}_s}(2,\mc{L}_s),\mb{Z}).\]
   Denote by $\alpha_\infty:=\tau^2_s(\alpha_s)$, $\beta_\infty:=\tau^4_s(\beta_s)$ and $\psi_i^\infty:=\tau^3_s(\psi_i)$
  for $1 \le i \le 2g$. Since the isomorphism $\tau_s^i$ commutes with cup-product, 
 the cohomology ring $H^*(\mc{G}(2,\mc{L})_\infty,\mb{Q})$ is generated by $\alpha_\infty, \beta_\infty$ and $\psi_i^{\infty}$ for $1 \le i \le 2g$.
 
 By Remark \ref{isom2} we have $\mr{Gr}^W_4H^3(\mc{G}(2,\mc{L})_\infty,\mb{Q}) \cong \mr{Gr}^W_2H^3(\mc{G}(2,\mc{L})_\infty,\mb{Q}) \cong \mb{Q}$. 
 After changing basis if necessary, we can assume that $\psi_{2g}^\infty$ (resp. $\psi_{g}^\infty$)  
  generates $\mr{Gr}^W_4H^3(\mc{G}(2,\mc{L})_\infty,\mb{Q})$ (resp. $\mr{Gr}^W_2H^3(\mc{G}(2,\mc{L})_\infty,\mb{Q})$) and the image of $\psi_i^\infty$ in $\mr{Gr}^W_4H^3(\mc{G}(2,\mc{L})_\infty,\mb{Q})$ for any $1 \le i \le 2g-1$, is $0$.
   Let $\xi_0$ be a generator of $H^2(\mb{P}^1, \mb{Q})$, $\pr_i$ are the natural 
  projections from $\mb{P}^1 \times \mb{P}^1$ to $\mb{P}^1$ and $\xi_i:=\pr_i^*(\xi_0)$ for $i=1,2$.
 Recall, by the Leray-Hirsch theorem (see \cite[Theorem $7.33$]{v4}), we have:
 \begin{equation}\label{ntor04}
  H^i(\mc{G}_0 \cap \mc{G}_1) \cong H^i(M_{\widetilde{X}_0}(2,\widetilde{\mc{L}}_0)) \oplus H^{i-2}(M_{\widetilde{X}_0}(2,\widetilde{\mc{L}}_0)) \otimes (\mb{Q}\xi_1 \oplus \mb{Q}\xi_2) \oplus H^{i-4}(M_{\widetilde{X}_0}(2,\widetilde{\mc{L}}_0)) \xi_1\xi_2.
  \end{equation}
  Since $\mc{G}_0 \cap \mc{G}_1$ is smooth, rationally connected, by \cite[Corollary $4.18$]{Deba} we have $H^0(\Omega^1_{\mc{G}_0 \cap \mc{G}_1})=0$.
  Note that, $H^0(\Omega^1_{\mc{G}_0 \cap \mc{G}_1})$ is isomorphic to $H^{1,0}(\mc{G}_0 \cap \mc{G}_1, \mb{C})$, under the Hodge decomposition of 
  $H^1(\mc{G}_0 \cap \mc{G}_1, \mb{C})$ (see \cite[Corollary $7.4$]{v4}). Hence, \[H^1(\mc{G}_0 \cap \mc{G}_1, \mb{C}) \cong 
                            H^{1,0}(\mc{G}_0 \cap \mc{G}_1, \mb{C}) \oplus  \overline{H^{1,0}(\mc{G}_0 \cap \mc{G}_1, \mb{C})}=0,\]
where $\overline{(\,)}$ denotes complex conjugation.
  Therefore the morphism  $\mr{sp}_3$ in \eqref{ntor09} is injective. Moreover \eqref{ntor09} also implies that $f_2$ is injective and non-zero.
  Let $\xi_1^{(2)}, \xi_2^{(2)} \in H^2(\mc{G}_{X_0}(2,\mc{L}_0),\mb{Q})$ such that $\xi_1^{(2)}=f_2(1)$ for $1 \in H^0(\mc{G}_1 \cap \mc{G}_2,\mb{Q})$ and
 $\mr{sp}_2(\xi_2^{(2)})=\alpha_\infty$. 
 Denote by \[\xi_i^{(3)}:=\mr{sp}_3^{-1}(\psi_i^\infty) \in H^3(\mc{G}_{X_0}(2,\mc{L}_0),\mb{Q}),\, \mbox{ for }\, 1 \le i \le 2g-1.\] 
 Let $\xi_j^{(4)}$ for $1 \le j \le 3$ such that $\mr{sp}_4(\xi_1^{(4)})=\beta_\infty$ and under the identification \eqref{ntor04}, 
 \[f_4(\xi_1 \oplus \xi_2)=\xi_2^{(4)} \mbox{  and } f_4(\alpha_0)=\xi_3^{(4)}.\] 
  Let $\xi_i^{(5)}=f_5(\psi_i')$ for $1 \le i \le 2g-2$ 
 and $\xi_1^{(6)}:=f_6(\beta_0)$.
 Let $\xi_2^{(6)} \in H^6(\mc{G}_{X_0}(2,\mc{L}_0),\mb{Q})$ such that $\mr{sp}_6(\xi_2^{(6)})=\psi_g^\infty\psi_{2g}^\infty$.

Let $V \subset H^*(\mc{G}_{X_0}(2,\mc{L}_0),\mb{Q})$ be the sub-ring of the cohomology ring, generated by $\xi_j^{(i)}$ for $1 \le i \le 6$. 
Note that, $V$ is 
equipped with a natural grading coming from the cohomology ring. Denote by $V_k := V \cap H^k(\mc{G}_{X_0}(2,\mc{L}_0), \mb{Q})$.
To prove that $V=H^*(\mc{G}_{X_0}(2,\mc{L}_0), \mb{Q})$, we need to prove that for each $k$, $\mr{sp}_k(V_k)=\mr{Im}(\mr{sp}_k)$ and $\Ima(f_k) \subset V_k$.
By \cite[Theorem $1$]{new1}, the identification \eqref{ntor04} implies that $H^*(\mc{G}_0 \cap \mc{G}_1,\mb{Q})$
is generated by $\alpha_0, \beta_0, \xi_1, \xi_2$ and $\psi'_i$ for $1 \le i \le 2g-2$.
Using \eqref{ntor09} we conclude that $\Ima(f_k) \subset V_k$.
We now show that $\mr{sp}_k(V_k)=\mr{Im}(\mr{sp}_k)$. 

Using \cite[Remark $5.3$]{kingn} one can check that 
$H^k(\mc{G}(2,\mc{L})_\infty,\mb{Q})$ is $\mb{Q}$-generated by monomials of 
the form $\alpha_\infty^{i}\beta_{\infty}^j\psi_{i_1}^\infty...\psi_{i_t}^\infty$, where $i+t<g, j+t<g, i_1<i_2<...<i_t$ and $2i+4j+3t=k$. Since cup-product 
is a morphism of mixed Hodge structures, we conclude that $W_kH^k(\mc{G}(2,\mc{L})_\infty,\mb{Q})$ is generated by all such monomials 
for which either $i_t<2g$ or $\{g,2g\} \in \{i_1, i_2,..., i_t\}$. Hence, $\mr{sp}_k(V_k)=\Ima(\mr{sp}_k)$. The minimality of the set 
$\{\xi_i^{(j)}\}_{1 \le j \le 6}$ of generators 
is straightforward. This proves the theorem.
\end{proof}

\begin{rem}\label{rem:simp}
We remark here that in \cite[Theorem $1.1$]{mumf}, Dan and Kaur show that 
the generators of the cohomology ring of $U_{X_0}(2,\mc{L}_0)$ are
given as follows:
 there exist
  $\alpha_{2j} \in H^{2j}(U_{X_0}(2,\mc{L}_0), \mb{Q})$ for $1 \le j \le 3$ and
  $\gamma_i \in H^3(U_{X_0}(2,\mc{L}_0), \mb{Q})\mbox{ for } 1 \le i \le 2g-1$
  forming a minimal set of generators of the cohomology ring  $H^*(U_{X_0}(2,\mc{L}_0), \mb{Q})$. 
 \end{rem}

 \bibliographystyle{plain}

\end{document}